\let\cline\cmidrule
\DeclarePairedDelimiter\abs{\lvert}{\rvert}
\let\oldabs\abs
\def\abs{\@ifstar{\oldabs}{\oldabs*}}
\DeclareMathAlphabet\mathbfcal{OMS}{cmsy}{b}{n}
\theoremstyle{thmstyleone}%
\newtheorem{theorem}{Theorem}%  meant for continuous numbers
\newtheorem{proposition}[theorem]{Proposition}% 
\theoremstyle{thmstyletwo}%
\newtheorem{example}{Example}%
\theoremstyle{thmstylethree}%
\begin{document}

\title[On \texorpdfstring{$\lambda$}{}-Cent-Dians and Generalized-Center for Network Design]{On \texorpdfstring{$\lambda$}{}-Cent-Dians and Generalized-Center for Network Design: Formulations and Algorithms}

%%=============================================================%%
%% GivenName	-> \fnm{Joergen W.}
%% Particle	-> \spfx{van der} -> surname prefix
%% FamilyName	-> \sur{Ploeg}
%% Suffix	-> \sfx{IV}
%% \author*[1,2]{\fnm{Joergen W.} \spfx{van der} \sur{Ploeg} 
%%  \sfx{IV}}\email{iauthor@gmail.com}
%%=============================================================%%

\author[1,2]{\fnm{V\'ictor} \sur{Bucarey}}\email{victor.bucarey@uoh.cl}
\equalcont{These authors contributed equally to this work.}

\author*[3]{\fnm{Natividad} \sur{Gonz\'alez-Blanco}}\email{ngonzalez@uloyola.es}

\author[4,5]{\fnm{Martine} \sur{Labb\'e}}\email{mlabbe@ulb.ac.be}
\equalcont{These authors contributed equally to this work.}

\author[6,7]{\fnm{Juan A.} \sur{Mesa}}\email{jmesa@us.es}
\equalcont{These authors contributed equally to this work.}

\affil[1]{\orgdiv{Institute of Engineering Sciences}, \orgname{Universidad de O'Higgins}, \orgaddress{\city{Rancagua}, \country{Chile}}}

\affil[2]{\orgdiv{Instituto Sistemas Complejos de Ingenier\'ia (ISCI)}, \orgaddress{\city{Santiago Centro}, \country{Chile}}}

\affil[3]{\orgname{Universidad Loyola Andalucía}, \orgdiv{Departamento de M\'etodos Cuantitativos},  \orgaddress{\city{Dos Hermanas}, \country{Spain}}}

\affil[4]{\orgdiv{D\'epartement d'Informatique}, \orgname{Universit\'e Libre de Bruxelles}, \orgaddress{\city{Brussels}, \country{Belgium}}}

\affil[5]{\orgdiv{Inria Lille-Nord Europe}, \orgaddress{ \city{Villeneuve d'Ascq}, \country{France}}}

\affil[6]{\orgdiv{Departamento de Matem\'atica Aplicada II}, \orgname{Universidad de Sevilla}, \orgaddress{\city{Sevilla}, \country{Spain}}}

\affil[7]{\orgdiv{Instituto de Matem\'aticas de la Universidad de Sevilla}, \orgname{Universidad de Sevilla}, \orgaddress{\city{Sevilla}, \country{Spain}}}

%%==================================%%
%% Sample for unstructured abstract %%
%%==================================%%

\abstract{In this paper, we study the $\lambda$-centdian problem in the domain of Network Design. The focus is on designing a sub-network within a given underlying network while adhering to a budget constraint. This sub-network is intended to efficiently serve a collection of origin/destination demand pairs. We extend the work presented in \cite{bucarey2024on}, providing an algorithmic perspective on the generalized $\lambda$-centdian problem.

In particular, we provide a mathematical formulation for $\lambda\geq 0$ and discuss the bilevel structure of this problem for $\lambda>1$. Furthermore, we describe a procedure to obtain a complete parametrization of the Pareto-optimality set based on solving two mixed integer linear formulations by introducing the concept of maximum $\lambda$-cent-dian. We evaluate the quality of the different solution concepts using some inequality measures. Finally, for $\lambda\in[0,1]$, we study the implementation of a Benders decomposition method to solve it at scale.}

\keywords{$\lambda$-Cent-Dian Problem, Generalized-Center Problem, Network Design, Benders decomposition, Pareto-optimality}

\maketitle

\section{Introduction}\label{sec1}

The \textit{center} and \textit{median} problems in graphs and Euclidean spaces were central to Location Science in the 1950s and 60s. Median problems focus on efficiency, while center problems aim to maximize equity. The median problem minimizes the sum of weighted distances from demand points to the nearest facility, while the center problem minimizes the largest such distance.

The median problem suits cost minimization, while the center problem is ideal for emergency services. However, separately, they do not balance efficiency and equity.

In \cite{halpern1976location}, the term $\lambda$ cent-dian was introduced for location problems that aimed to minimize a linear combination of center and median objectives, $F_c$ and $F_m$, represented as $H_{\lambda} = \lambda F_c + (1-\lambda)F_m$. In \cite{halpern1978finding}, it was shown that the $\lambda$ cent-dian of a graph lies on a path between the center and the median. \cite{hansen1991median} proposed an $O(|N||E|\log(|N||E|))$ algorithm to find all $\lambda$-cent-dian points and introduced the \textit{generalized-center}, the minimizer of the difference between the center and median. As $\lambda \rightarrow \infty$, the ratio $\frac{H_{\lambda}}{\lambda}$ tends to the generalized-center. This objective favors equity between O/D pairs, but may lead to inefficient solutions, as noted by \cite{ogryczak1997centdians}. An axiomatic approach to the $\lambda$-cent-dian criterion was provided in \cite{carrizosa1994axiomatic}.

The $\lambda$-cent-dian objective has also been applied to extensive facility location problems, where facilities are too large to be represented by isolated points. Examples of extensive facilities include paths, cycles, or trees on graphs, and straight lines, circles, or hyperplanes in Euclidean spaces (see \cite{diaz2004continuous}, \cite{mesa1996review}, \cite{puerto2009extensions}, \cite{schmidt2014location}).

The $\lambda$-cent-dian criterion balances two conflicting objectives. We are the first to formalize the $\lambda$-cent-dian network design problem, focusing on optimal sub-networks and O/D pair demand instead of single-point facilities.

Network Design problems are applied in telecommunications, transportation, and other fields. In some cases, demand is represented by O/D pairs, generating flows that the network must manage. Multiple networks may compete to capture demand, as seen in mobile telephony and urban mobility, where different transportation modes compete for commuters' preferences.

Most Network Design problems focus on cost or profit, often serving as surrogates for median problems with weighted distance summation. Some scenarios, such as daily commute in metropolitan areas or electricity distribution in rural regions, require considering proximity, where the center objective must be combined with the median objective due to concerns about time or power loss (see \cite{gokbayrak2022two}).

In this article, we exploit the structure of the problem when $\lambda\in [0,1]$ by exploring a Benders Decomposition approach. This scheme has been extensively studied for Network Design problems (see \cite{magnanti1986tailoring, fortz2009improved, marin2009urban, botton2013benders}). We also extend the strong cuts introduced by \cite{conforti2019facet2} to this setting. These cuts have shown competitive results in many applications in Facility Location and Network Design Problems (see \cite{cordeau2019benders, bucarey2022benders}).

The contributions of this work are the following.
\begin{itemize}
		\item We provide a mathematical formulation for the $\lambda$-cent-dian problem with $\lambda\geq 0$, which has been presented in \cite{bucarey2024on}. Furthermore, we address the bilevel structure when $\lambda>1$. We then convert the bilevel problem into a valid single-level problem formulation. Additionally, we integrate the criterion for efficiency by imposing a lower bound for the median value of the solution network.
	\item We outline a method to give a complete parametrization of the Pareto-optimality set based on solving two linear formulations.
    \item We evaluate and discuss the quality of the different solution concepts considered using some inequality measures. This evaluation incorporates the generalized-center concept, which has been explored in the existing literature. As you can see, by considering the criterion for efficiency referred to above, this concept is not always dominated. It depends on the measures used. 
	\item Finally, in order to facilitate the solution of large instances, we tested a Benders decomposition approach to solve the problem for the case $\lambda\in[0,1]$. We tackle the problem using a Benders decomposition implementation and the ideas exposed in \cite{conforti2019facet2}. Our computational experiments show that our Benders implementation is competitive with exact and non-exact methods existing in the literature and even compared with the exact method of Benders decomposition existing in \texttt{CPLEX}.
\end{itemize}

The structure of the paper is as follows. In Section \ref{sec:problem_definition}, we present the problem. In Section \ref{sec:problem_formulation}, we propose a mathematical formulation for $\lambda\geq 0$. In addition, we discuss the bilevel structure when $\lambda>1$, and adjust the formulation for this case. Then, we describe a procedure to give a complete parametrization of the Pareto-optimality set based on solving two linear formulations. Then, in Section \ref{sec:delta_results}, we investigate the quality of the solution concepts developed in this work. We develop a Benders decomposition approach to solve the problem for the case $\lambda\in[0,1]$ along with efficient tools to find facet-defining cuts provided in \cite{conforti2019facet2}, shown in Section \ref{sec:alg_discussion}. Finally, our conclusions are presented in Section \ref{sec:conclusions}.

It is important to note that in this paper we will refer to some propositions, examples, and results obtained in \cite{bucarey2024on} since, in this part, we are focused on the solution to the problems presented in it.

\section{Problems definition}\label{sec:problem_definition}

In this section, we briefly describe the setting and solution concepts that will be modeled throughout the paper, which were first introduced in \cite{bucarey2024on}.

\subsection{Setting description}
In this paper, we address a network design problem based on an undirected graph $\mathcal{N} = (N, E)$, with node costs $b_i \geq 0$ for $i \in N$ and edge costs $c_e \geq 0$ for $e \in E$. Each edge $e = \{i, j\} \in E$ generates two arcs: $a = (i, j)$ and $\hat{a} = (j, i)$, forming the set of arcs $A$. Arc lengths $d_a \geq 0$ may represent times to traverse, generalized costs, or other parameters. For each node $i \in N$, $\delta(i)$ is the set of incident edges, while $\delta^+(i)$ and $\delta^-(i)$ denote outgoing and incoming arcs, respectively.

The mobility patterns are represented by a set $W \subset N \times N$ of origin-destination (O/D) pairs with associated demands $g^w > 0$, where $w = (w^s, w^t) \in W$ and $w^s \neq w^t$. The total demand is $G = \sum_{w \in W} g^w$. Each pair also has a private utility $u^w > 0$, representing an existing mode of transport. An O/D pair is served/covered by the new network if it offers a path with a length not greater than $u^w$. For each $w \in W$, the subgraph $\mathcal{N}^w = (N^w, E^w)$ contains all the paths in $\mathcal{N}$ with lengths within $u^w$.

We consider subgraphs $\mathcal{S} = (N_{\mathcal{S}}, E_{\mathcal{S}})$ constrained by a budget $\alpha C_{total}$, where $C_{total}$ is the total cost of building $\mathcal{N}$. A subgraph $\mathcal{S}$ is feasible if:
\begin{equation}
    \sum_{i \in N_{\mathcal{S}}} b_i + \sum_{e \in E_{\mathcal{S}}} c_e \leq \alpha \left(\sum_{i \in N} b_i + \sum_{e \in E} c_e\right) = \alpha\, C_{total}.
\end{equation}
We denote the set of all feasible subgraphs for a given $\alpha \in [0, 1]$ as $\mathbfcal{N}^\alpha$.

For a subgraph $\mathcal{S}$ and O/D pair $w \in W$, $d_{\mathcal{S}}(w)$ represents the shortest path length between $w^s$ and $w^t$ in $\mathcal{S}$. If no connection exists, $d_{\mathcal{S}}(w) = +\infty$. The effective travel length of $w$ in $\mathcal{S}$ is
$\ell_{\mathcal{S}}(w) = \min\{d_{\mathcal{S}}(w), u^w\}$.

\subsection{Solution concepts}

For a given $\alpha\in (0,1)$, the (weighted) median is defined as a subgraph $\mathcal{S}_m$ in $\mathbfcal{N}^\alpha$ that minimizes the objective function
\begin{equation}
	F_m(\mathcal S) = \frac{1}{G} \sum_{w \in W} g^w\ell_{\mathcal{S}}(w). \label{median_obj}
\end{equation}

A subgraph $\mathcal{S}_c$ is called a center if it is a minimizer of the following objective function
\begin{equation}
	F_c(S) = \max_{w \in W} \ell_{\mathcal{S}}(w). \label{center_obj}
\end{equation}

\noindent Given that the center minimizes the maximum travel time, it could lead to \textit{inefficient} solutions. This inefficiency is produced by not considering feasible subgraphs in which the travel time decreases for some users while maintaining the center objective function value.

\tikzstyle{vertex}=[circle,fill=white, draw=black,minimum size=20pt,font=\footnotesize,inner sep=0pt]
\tikzstyle{vertex2}=[circle,minimum size=15pt,font=\scriptsize,inner sep=0pt]
\tikzstyle{selected vertex} = [vertex, fill=red!24]
\tikzstyle{edge} = [draw,thick,-]
\tikzstyle{edge2} = [draw,dashed,-]
\tikzstyle{weight} = [font=\scriptsize,inner sep=1pt]
\tikzstyle{selected edge} = [draw,line width=5pt,-,red!50]
\tikzstyle{ignored edge} = [draw,line width=5pt,-,black!20]

It has been highly motivating that there is a necessity of having a refined conceptual framework to consider solutions that are not dominated and capture the trade-off between both solution concepts (see Examples 1 and 2). Recognizing this need for balanced solutions, we considered a convex combination of both the center and the median objectives. In the realm of network design, for a given $\lambda \in[0,1]$, the $\lambda$-cent-dian is a subgraph that aims to minimize the following objective function:
\begin{equation}
	H_\lambda(S) = \lambda F_c(\mathcal S) + (1-\lambda)F_m(\mathcal S) = \lambda \max_{w \in W} \ell_{\mathcal{S}}(w) + (1-\lambda) \frac{1}{G} \sum_{w \in W} g^w\ell_{\mathcal{S}}(w), \label{eq:cent-dian}
\end{equation}

On the other hand, we deal with the concept of {\it generalized-center} in the network design area, which 
was introduced by \cite{hansen1991median} for the first time in the facility location area. This concept was formulated to reduce discrepancies in accessibility among users as much as possible. The {\it generalized-center} corresponds to a subgraph, $\mathcal S_{gc}$, which minimizes the disparity between the center and median objectives, denoted as $F_{gc}(\,\cdot\,)$. In fact, when $\lambda \rightarrow \infty$, the ratio $\frac{H_{\lambda}}{\lambda}$ tends to the generalized-center. 

As is shown in Example 3 of \cite{bucarey2024on}, the generalized-center can lead with unreasonable solutions from the point of view of the median value. To avoid such solution networks, the optimal solution network is restricted to the set of solution networks so-called as \textit{Pareto-optimal concerning the distances (of the shortest paths)}. We denote the set of subgraphs satisfying this definition of Pareto-optimality as $\mathbfcal{PO}^\alpha$. Thus, we redefined the generalized-center as the optimal solution to the problem
\begin{equation}\label{eq:GC}
	\min\{F_c(\mathcal{S})-F_m(\mathcal{S}): \mathcal{S}\in \mathbfcal{PO}^{\alpha}\}.
\end{equation}

Nevertheless, it is easy to see (Example 4) that even when the problem of the generalized-center is constrained to the set $\mathbfcal{PO}^{\alpha}$, there is potential for obtaining highly inefficient solution graphs as optimal solutions. 

There is a necessity to revisit the notion of Pareto-optimality. We perform this by extending the notion of Pareto-optimality to the bi-criteria setting: a subgraph $\mathcal{S}$ is Pareto-optimal if there no exists another subgraph $\mathcal{S'}$ in $\mathbfcal{N}^\alpha$ for which 
\begin{equation}
	F_m(S') \le F_m(S) \quad \mbox{ and } \quad F_c(S') \le F_c(S),
\end{equation}
\noindent with one of the inequalities being strict. We denote the set of subgraphs satisfying this property $\mathbfcal{PO}^\alpha_2$. Hence, we define the \textit{restricted-generalized-center} as an optimal solution to the problem 
\begin{equation}
	\min\{F_c(\mathcal{S})-F_m(\mathcal{S}): \mathcal{S}\in \mathbfcal{PO}^{\alpha}_2\}. \label{eq:restricted_general_center}
\end{equation}

Analogously to the restricted-generalized-center, we introduce the \textit{$\lambda$-restricted-cent-dian}. It is defined as the optimal solution to the problem:
\begin{equation}
	\min\{H_{\lambda}(\mathcal{S}): \mathcal{S}\in \mathbfcal{PO}^{\alpha}_2\}.
\end{equation}

It is easy to check that the generalized-center does not necessarily belong to the set $\mathbfcal{PO}^{\alpha}_2$ (see Example 4). In addition, in that work it is proved that for all $\lambda\geq 1$ the corresponding $\lambda$-restricted-cent-dian is always a center (Proposition 2). Hence, to belong to $\mathbfcal{PO}^{\alpha}_2$, the center must be unique, or it must be the center with the best value of $F_m(\,\cdot\,)$. Thus, the center belonging to $\mathbfcal{PO}^{\alpha}_2$ is an optimal solution of the following lexicographic problem
\begin{equation}\label{eq:lexmin}
	\mbox{lex}\min\{[F_c(\mathcal{S}),F_m(\mathcal{S})]:\,\,\mathcal{S}\in\mathbfcal{N}^{\alpha}\}.
\end{equation}

We refer to the optimal solution of \eqref{eq:lexmin} as a \textit{lexicographic cent-dian}. Then, we conclude that the $\lambda$-restricted-cent-dians for $\lambda\geq1$ are simply the centers with the best median value. 

So far, we have discussed Pareto-optimality concerning the center and median functions. We have only evaluated the solution concepts presented in this section from these two perspectives. As a result, the generalized center has been misjudged and overlooked. As we shall see later, the generalized center is not a dominated solution from the perspective of other inequality measures. However, to simply impose the degree of inefficiency of the solution network, we consider the following constraint. 
\begin{equation}\label{eq:delta_constraint}	F_m(\mathcal{S}^*)\leq(1+\Delta)\,F_m(\mathcal{S}_m), \quad\text{ with } \Delta\geq 0. 
\end{equation}

Finally, we justified that, for the purpose of identifying some compromise $\lambda$-cent-dians on a general network, we needed a solution concept different from the ones discussed so far. The Pareto-optimality set $\mathbfcal{PO}^{\alpha}_2$ can be completely parametrized through the solution of the lexicographic (two-level) problem
\begin{equation}\label{eq:lexmin2}
	\mbox{lex}\min\{[\Bar{H}_{\lambda}(\mathcal{S}),H_{\lambda}(\mathcal{S})]:\,\,\mathcal{S}\in\mathbfcal{N}^{\alpha}\},
\end{equation} 

where the weighted function $\Bar{H}_{\lambda}(\,\cdot\,)$ has the following expression
\begin{equation}\label{eq:chevfunc}
	\Bar{H}_{\lambda}(\mathcal{S}) = \max\{\lambda\,F_c(\mathcal{S}),(1-\lambda)\,F_m(\mathcal{S})\}.
\end{equation}

The second stage is necessary in the case of a non-unique optimal solution in the first stage. Thus, we call a subgraph $\mathcal{S}\in\mathbfcal{N}^{\alpha}$ a \textit{maximum $\lambda$-cent-dian} if it is an optimal solution of the lexicographic problem \eqref{eq:lexmin2}.

\section{Problem formulations}\label{sec:problem_formulation}

In this section, we present mixed-integer linear formulations for the $\lambda$-cent-dian problem as defined in equation~\eqref{eq:cent-dian}. Initially, we introduce a mixed-integer formulation suitable for instances where $\lambda \in [0,1]$. However, this formulation becomes invalid for cases where $\lambda > 1$. Consequently, we propose a general-case representation as a bilevel problem formulation. This includes considering the limit as $\lambda \rightarrow \infty$, which aligns with a valid formulation for the generalized-center problem (see equation~\eqref{eq:GC}). We then convert the bilevel problem into a single-level valid formulation. Furthermore, we integrate the criterion for efficiency, as specified in equation~\eqref{eq:delta_constraint}, into our proposed models. Finally, we describe a method to obtain the set $\mathcal{PO}^{\alpha}_2$ by solving two distinct linear formulations.

\subsection{Problem formulation for \texorpdfstring{$\lambda\in[0,1]$}{}}

We present a mixed-integer formulation of the $\lambda$-cent-dian problem, (CD) in what follows, for the case $\lambda\in[0,1]$ by introducing the design variables $y_i$ and $x_e$ that represent the binary design decisions of constructing node $i$ and edge $e$, respectively. For each $w\in W$, a set of flow variables $f^w_a,\,a\in A$ is used to model a path between $w^s$ and $w^t$, if possible. The variable $f_a^w$ takes the value $1$ if the arc $a$ belongs to the path from $w^s$ to $w^t$, and $0$ otherwise. We consider an extra artificial arc $r=(w^s, w^t)$ and its corresponding flow variable $f^w_r$ to model the alternative mode. The variable $f^w_r$ takes the value $1$ if the length of the shortest path from $w^s$ to $w^t$ in the designed subgraph is greater than $u^w$ and 0 otherwise. In other words, $f^w_r=1$ represents the binary decision of the demand taking the alternative mode. Finally, we consider the continuous variable $\gamma^{max}$, taking the value of the maximum distance of any O/D pair in the graph. 
\begin{align}
	\mbox{(CD)} \,\, \min\, & \,\, \lambda\gamma^{max} + (1-\lambda)\frac{1}{G}\sum_{w\in W} g^w\left( \sum_{a \in A^w} d_a\,f_a^w +u^w\,f_r^w \right) \label{cons:fobj}\\
	\text{ s.t.} & \,\, \sum\limits_{e\in E}c_e\,x_e +\sum\limits_{i\in N}b_i\,y_i \leq \alpha\,C_{total}, \label{cons:budget}\\
	& \,\, x_e\leq y_i, \hspace{6.7cm} e\in E, \, i\in e,\label{cons:location}\\
    \begin{split}
    & \,\, f_r^w+\sum_{a \in \delta_w^+(i)} f^w_a -\sum_{a \in \delta_w^-(i)} f^w_a = \\
	& \,\, = \begin{cases}
		1, &\text{if $i = w^s$},\\
		-1, &\text{if $i = w^t$}, \\
		0, & \text{otherwise}, 
	\end{cases} \hspace{0.2cm} w=(w^s,w^t)\in W,\, i\in N, \label{cons:flow}
    \end{split}\\
	& \,\, f_a^w + f_{\hat{a}}^w \leq x_e,\hspace{2.5cm} w\in W,\, a=(i,j) \in A^w: e=\{i,j\},\label{cons:capacity}\\
	& \,\, \sum_{a \in A} d_a\,f_a^w +u^w\,f_r^w\leq \gamma^{max}, \hspace{4.6cm} w\in W, \label{cons:center}\\
	& \,\, x_e, \, y_i, \, f_r^w, \,f_a^w \, \in \{0,1\},\,\, \gamma^{max}\geq0, \hspace{0.4cm} i\in N, \, e\in E, \, a \in A, \, w\in W. \label{cons:var}
\end{align}

The objective function \eqref{cons:fobj} to be minimized represents a convex combination between the center and the median objectives. Constraint \eqref{cons:budget} limits the total construction cost, being $\alpha\in (0,1]$. Constraint \eqref{cons:location} ensures that if an edge is constructed, then its terminal nodes are constructed as well. For each pair $w$, constraints \eqref{cons:flow},  guarantee flow conservation and demand satisfaction. Constraints \eqref{cons:capacity} are named \textit{capacity constraints} and force each edge to be used in at most one direction by each O/D pair whenever such edge is built. Constraints \eqref{cons:center}, referenced as \textit{maximum distance constraints}, determines the maximum length of the shortest path between all pairs $w=(w^s,w^t) \in W$. The structure of the objective function ensures that variable $f_r^w$ assumes the value of $0$ only if a path between $w^s$ and $w^t$ exists with a length not exceeding $u^w$ in the prospective network. This path is symbolized by the variables $f_r^w$. Subsequently, $\gamma^{max}$ denotes the greatest $\ell_{\mathcal{S}}(w)$ within the set $W$. Finally, constraints \eqref{cons:var} specify that all variables are binary, with the exception of variable $\gamma^{max}$.

For $\lambda \in [0,1)$, the objective function of the above formulation is composed of two non-negative coefficients ($\lambda$ and $(1-\lambda)$) that multiply increasing functions of travel times. In consequence, at any optimal solution, the demand always chooses the shortest path without the necessity of enforcing this condition. That is, optimal solution vectors $\boldsymbol{f}^w$ correspond to shortest paths in the solution network. 

For $\lambda \ge 1$, the variables $f$ are no longer mandated to represent the shortest path. In the specific instance where $\lambda = 1$, the solution sub-network of (CD) serves as a center of the network, preserving the center objective's correctness. This suggests that (CD) remains a valid formulation for the center problem. However, when $\lambda > 1$, this no longer holds true. As the term ($1-\lambda$) is negative, each O/D pair $w$ within this formulation will opt for an arbitrary path between $w^s$ to $w^t$ in the designed network. %We illustrate this situation in Example $1$.

\begin{example}
Let us consider the same potential network and its associated data from Example $3$ of \cite{bucarey2024on} with $\alpha\,C_{total}=100$ and $\lambda=50\cdot10^4$. One of the optimal $\lambda$-cent-dian solution networks for formulation (CD) is

\begin{center}
	\begin{tikzpicture}[scale=0.45, auto,swap]
		\foreach \pos/\name in {{(1.5,2.5)/v_1}, {(3,4)/v_2}, {(3,1)/v_3}, {(5,4)/v_4}, {(5,1)/v_5}, {(6.5,2.5)/v_6}}
		\node[vertex] (\name) at \pos {$\name$};
		\foreach \source/\dest in {v_2/v_1, v_1/v_3, v_3/v_2, v_3/v_5, v_5/v_4, v_5/v_6}
		\path[edge] (\source) -- (\dest);
	\end{tikzpicture}
\end{center}
with the flow vector $f^w_r=1,\,w\in\{(1,2),(4,1)\}$ and $f^w_r=0,\,w\in\{(2,6)\}$. The path selected for $w=(2,6)$ is $[2,1,3,5,6]$ but the path $[2,3,5,6]$ is shorter than the selected one. Besides, this solution does not assign the pairs $(1,2)$ and $(4,1)$ to the existing paths in the network although they are shorter than the utilities $u^{(1,2)}=92$ and $u^{(4,1)}=92$. Then, the objective value is $92-91.90$. Nevertheless, this is not the solution network that should be obtained, as you can check in Example 3.
\end{example}

\subsection{Problem formulation for \texorpdfstring{$\lambda>1$}{}. General problem formulation}

We introduce a bilevel formulation to ensure that each $w$ selects the shortest path in the solution network for any value of $\lambda\ge 0$, especially for cases where $\lambda >1$. This is called the \textit{bilevel $\lambda$-cent-dian} formulation and is denoted as (BCD).

\begin{align}
	\mbox{(BCD)}\, \min & \quad \lambda\gamma^{max} + (1-\lambda)\frac{1}{G}\sum_{w\in W} g^w\left( \sum_{a \in A^w} d_a\,f_a^w +u^w\,f_r^w \right)\\
	\text{ s.t.} & \quad \sum\limits_{e\in E}c_e\,x_e +\sum\limits_{i\in N}b_i\,y_i \leq \alpha\,C_{total}, \\
	& \quad x_e\leq y_i, \hspace{6.6cm} e\in E, \, i\in e,\\
	& \quad \sum_{a \in A} d_a\,f_a^w +u^w\,f_r^w\leq \gamma^{max}, \hspace{4.5cm} w\in W, \\
	& \quad \boldsymbol{f}'^w\in sol(\mbox{(F)}^w), \hspace{6.2cm} w\in W,\\
	&\quad x_e, \, y_i, \, f_a^w \, \in \{0,1\},\quad \gamma^{max}\geq0, \hspace{0.6cm} i\in N, \, e\in E, \, a \in A, \, w\in W,
\end{align}

\noindent where
\begin{equation}\label{eq:CD_2_bilevel}
	\mbox{(F)}^w = \arg\min\limits_{\boldsymbol{f}'}\left\{\left( \sum_{a \in A^w} d_a\,f_a'^w +u^w\,f_r'^w \right):\, \text{ s.t.}
	\left.\begin{cases}
		\text{const.}(\ref{cons:flow})-(\ref{cons:capacity}),\kern-1cm\\
		f_r'^w,f_a'^w \, \in \{0,1\},\\
        a\in A^w\kern-0.3cm
	\end{cases}\right\}\right\}.
\end{equation}

The bilevel formulation (BCD) consists in an upper-level task that designs a network to minimize the $\lambda$-cent-dian function, and a lower-level task for each O/D pair $w\in W$ that aims to identify the shortest path between $w^s$ and $w^t$ within the designed network. The optimal solutions for the lower-level problems are articulated in equation \eqref{eq:CD_2_bilevel} and are denoted as $\mbox{(F)}^w$.

We reformulate the bilevel problem (BCD) as a single-level problem by imposing the optimality conditions of each problem (F)$^w$ as constraints of the upper-level problem. To do so, we consider the dual of each problem (F)$^w$, denoted by (DF)$^w$:
\begin{align}
	\mbox{(DF)}^w\quad \max\limits_{\boldsymbol{\nu},\boldsymbol{\sigma}} & \quad\nu^w_{w^s} - \sum_{e \in E} x_e\,\sigma^w_e \\
	\mbox{s.t.}& \quad \nu^w_i - \nu^w_j - \sigma^w_e \leq d_a,\hspace{2.3cm} a=(i,j) \in A^w : e=\{i,j\}, \label{cons:duality1}\\
	& \quad \nu^w_{w^s} \leq u^w, \label{cons:duality2}\\
	& \quad \sigma^w_e \geq 0, \hspace{7cm} e\in E^w.
\end{align}

\noindent As it is clear from the context, we omit the index $w$. Variables $\nu_i,i\in N$, are the dual variables related to the flow constraints \eqref{cons:flow} and $\sigma_e,\,e\in E$, are the dual variables corresponding to the capacity constraints \eqref{cons:capacity}. Given that the set of flow constraints contains one linear dependent constraint, we set $\nu_{w^t} = 0$.

Since $d_a$ and $u^w$ are positive parameters, each $(F)^w$ has optimal finite solutions. In consequence, by strong duality, a feasible vector $\boldsymbol{f}^w$ for (F)$^w$ is optimal if and only if there exist feasible vectors $\boldsymbol{\nu}$ and $\boldsymbol{\sigma}$ for $\mbox{(DF)}^w$ such that:
$$\sum\limits_{a \in A} d_a\,f_a +u\,f_r = \nu_{w^s} - \sum\limits_{e \in E} x_e\,\sigma_e.$$
\noindent Then, (BCD) can be cast as a single-level non-linear optimization problem:
\begin{align}
	\mbox{(BCD\_R)} \,\, \min & \quad \lambda\gamma^{max} + (1-\lambda)\frac{1}{G}\sum_{w\in W} g^w\left( \sum_{a \in A^w} d_a\,f_a^w +u^w\,f_r^w \right)\nonumber\\
	\text{ s.t.} & \quad \sum\limits_{e\in E}c_e\,x_e +\sum\limits_{i\in N}b_i\,y_i \leq \alpha\,C_{total}, \nonumber\\
	&\quad x_e\leq y_i, \quad\quad e\in E, \, i\in e,\nonumber\\
	&\quad \sum_{a \in A} d_a\,f_a^w +u^w\,f_r^w\leq \gamma^{max}, \hspace{4cm} w\in W,\nonumber \\
	\begin{split}
    & \quad f_r^w+\sum_{a \in \delta_w^+(i)} f^w_a -\sum_{a \in \delta_w^-(i)} f^w_a = \\
	& \quad =\begin{cases}
		1, &\text{if $i = w^s$},\\
		0, & \text{if } i\notin\{w^s,w^t\}, 
	\end{cases} \quad w=(w^s,w^t)\in W,\, i\in N,\nonumber
    \end{split}\\
	&\quad f^w_a + f^w_{\hat{a}} \leq x_e,\hspace{1.7cm} w\in W,\, e=\{i,j\} \in E^w: a=(i,j),\nonumber\\
	&\quad \nu^w_i - \nu^w_j - \sigma^w_e \leq d_a,\hspace{0.9cm} w\in W,\, a=(i,j) \in A^w : e=\{i,j\},\nonumber\\
	&\quad \nu^w_{w^s} \leq u^w, \hspace{6.7cm} w\in W,\nonumber\\
	&\quad \sum\limits_{a \in A^w} d_a\,f^w_a +u^w\,f^w_r = \nu^w_{w^s} - \sum\limits_{e \in E^w} x_e\,\sigma^w_e, \hspace{2cm} w\in W, \label{cons:strongdual}\\
	\begin{split}
    & \quad x_e, \, y_i, \, f_r^w, \, f_a^w \, \in \{0,1\},\, \gamma^{max},\,\sigma^w_e\geq0, \, \\
     & \quad i\in N, \, e\in E, \, a \in A, \, w\in W.\label{eq_BCD:variable}
     \end{split}
\end{align}

The model above is a mixed integer non-linear problem that can be linearized by replacing the non-linear terms in \eqref{cons:strongdual}. We introduce the set of variables $\xi^w_e$, representing the product $x_e\,\sigma^w_e$ and we linearize them by using the following McCormick inequalities (see \cite{mccormick1976computability}):
\begin{align}
	\xi_e \le x_e \Sigma_e,\\
	\xi_e \le \sigma_e,\\
	\xi_e \ge \sigma_e - \Sigma_e(1-x_e),\\
	\xi_e \ge 0,
\end{align}
\noindent where $\Sigma_e$ is an upper bound of $\sigma_e$. 

The computational efficiency of the McCormick reformulation relies on the availability of tight bounds of $\sigma^w_e$. To get such bounds, we consider the assumption that for each $w\in W$, $d_{\mathcal{N}}(w)\leq u^w$, meaning that there exists a possible path to be designed for each pair that improves its private utility. This is not a restrictive assumption since a pair not satisfying this condition will always prefer the private mode and can be eliminated from the analysis. Under this mild assumption, Propositions \ref{prop:validbound} and \ref{prop:best_bound} establish an efficient way to compute the best bound on the dual variables $\sigma^w_e$.

\begin{proposition}\label{prop:validbound}
	The quantity $\Sigma^w_e =u^w-d_{\mathcal{N}}(w)$ is a valid bound for $\sigma_e^w$ in (BCD\_R).
\end{proposition}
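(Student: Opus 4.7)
The plan is to combine the strong-duality equation \eqref{cons:strongdual}, the dual feasibility bound $\nu_{w^s}^w \le u^w$ from \eqref{cons:duality2}, and the monotonicity of shortest-path length under subgraph inclusion. These three ingredients together control the sum $\sum_{e} x_e\sigma_e^w$, from which the per-edge bound will follow.

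First I would recognize that the left-hand side of \eqref{cons:strongdual} equals the effective travel length $\ell_{\mathcal{S}}(w)$: because the primal lower-level problem (F)$^w$ selects a shortest $w^s$-$w^t$ route (the physical shortest path in $\mathcal{S}$ when its length is at most $u^w$, and the artificial arc of cost $u^w$ otherwise), any feasible solution of (BCD\_R) satisfies
\[
\nu_{w^s}^w - \sum_{e \in E^w} x_e\,\sigma_e^w \;=\; \sum_{a\in A^w} d_a f_a^w + u^w f_r^w \;=\; \ell_{\mathcal{S}}(w).
\]
Next I would invoke $\nu_{w^s}^w \le u^w$ from \eqref{cons:duality2} together with the fact that $\mathcal{S}\subseteq \mathcal{N}$ uses the same arc lengths, so no path in $\mathcal{S}$ is shorter than the shortest path in $\mathcal{N}$, i.e.\ $\ell_{\mathcal{S}}(w) \ge d_{\mathcal{N}}(w)$. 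Combining these,
\[
\sum_{e \in E^w} x_e\,\sigma_e^w \;=\; \nu_{w^s}^w - \ell_{\mathcal{S}}(w) \;\le\; u^w - d_{\mathcal{N}}(w) \;=\; \Sigma_e^w.
\]
Since $\sigma_e^w \ge 0$ and $x_e \in \{0,1\}$, every summand is non-negative, hence each term $x_e\,\sigma_e^w$ is itself at most $\Sigma_e^w$. In particular $\sigma_e^w \le \Sigma_e^w$ holds directly for all edges with $x_e = 1$.

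The delicate case, and the main obstacle, is an edge $e\in E^w$ with $x_e = 0$, for which $\sigma_e^w$ does not appear in \eqref{cons:strongdual} and is only lower-bounded by dual feasibility \eqref{cons:duality1}. The plan here is to show that one may always select an optimal dual assignment in which such $\sigma_e^w$ are taken at their minimum feasible value, and that this minimum is itself at most $\Sigma_e^w$. Concretely, I would set $\nu_i^w$ equal to the shortest-path distance from $i$ to $w^t$ in $\mathcal{N}^w$, adjusted along any detour replacing a non-designed edge used by the shortest path of $\mathcal{N}$; triangle inequality then yields $|\nu_i^w - \nu_j^w| - d_e \le \ell_{\mathcal{S}}(w) - d_{\mathcal{N}}(w) \le \Sigma_e^w$ for every such edge, so the required minimum value of $\sigma_e^w$ is indeed at most $\Sigma_e^w$. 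Pinning down this detour argument, and verifying that the resulting $(\nu,\sigma)$ still satisfies \eqref{cons:strongdual} against the chosen primal flow, is the step I expect to require the most care.
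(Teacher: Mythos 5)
Your core argument is exactly the paper's: rewrite the strong-duality equation \eqref{cons:strongdual} as $\sum_{e\in E^w} x_e\,\sigma_e^w = \nu_{w^s}^w - \left(\sum_{a\in A^w} d_a f_a^w + u^w f_r^w\right)$, bound $\nu_{w^s}^w$ by $u^w$ via \eqref{cons:duality2}, bound the primal path length below by $d_{\mathcal{N}}(w)$, and conclude $\sum_{e\in E^w} x_e\,\sigma_e^w \le u^w - d_{\mathcal{N}}(w)$, whence each term is at most $\Sigma_e^w$ by non-negativity. Where you go beyond the paper is in flagging that this only pins down $\sigma_e^w$ for edges with $x_e=1$: for $x_e=0$ the variable vanishes from \eqref{cons:strongdual} and is constrained only by \eqref{cons:duality1}, so ``valid bound'' must be read as ``there exists an optimal dual certificate within the box.'' The paper's proof silently elides this case, so your observation is a genuine refinement rather than a detour; however, your proposed fix (choosing $\nu_i^w$ as adjusted shortest-path distances and invoking a triangle inequality along a detour) is only sketched, and as stated it is not clear that the resulting $(\boldsymbol{\nu},\boldsymbol{\sigma})$ still satisfies \eqref{cons:strongdual} against the chosen flow --- you acknowledge this yourself. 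For the purposes of matching the paper, the first part of your argument is complete and identical; the second part identifies a real gap in the published proof but does not yet close it.
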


\begin{proof}
	For each $w\in W$, rewriting \eqref{cons:strongdual}, we have that
	$$\sum\limits_{e \in E^w} x_e\,\sigma^w_e = \nu^w_{w^s} - \left(\sum\limits_{a \in A^w} d_a\,f^w_a + u^w\,f^w_r\right). $$
	Using \eqref{cons:duality2} and that $\sum\limits_{a \in A^w} d_a\,f^w_a + u^w\,f^w_r\geq d_{\mathcal{N}}(w)$, then
	$$\sum\limits_{e \in E^w} x_e\,\sigma^w_e \leq u^w -d_{\mathcal{N}}(w).$$
	Thus, we can conclude that $\sigma^w_e \leq u^w -d_{\mathcal{N}}(w)$.
\end{proof}

\begin{proposition}\label{prop:best_bound}
	The bound given in Proposition \ref{prop:validbound} is tight.
\end{proposition}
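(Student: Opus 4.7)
The plan is to establish tightness by exhibiting an explicit (minimal) instance of (BCD\_R) in which the dual variable $\sigma^w_e$ attains the value $u^w - d_{\mathcal{N}}(w)$, thereby ruling out any strictly smaller valid upper bound. Since the bound in Proposition \ref{prop:validbound} was derived from the two inequalities $\nu^w_{w^s} \le u^w$ and $\sum_{a}d_a f^w_a + u^w f^w_r \ge d_{\mathcal{N}}(w)$, I need an instance in which both hold with equality at some optimal primal--dual solution.

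Concretely, I would take $\mathcal{N}$ to consist of the two nodes $w^s$ and $w^t$ joined by a single edge $e$ of length $d>0$, with node/edge costs and a budget $\alpha C_{total}$ chosen so that the only non-trivial feasible design is $x_e=y_{w^s}=y_{w^t}=1$. Let $W=\{w\}$ with utility $u^w=u>d$, so that $d_{\mathcal{N}}(w)=d$ and $\Sigma^w_e = u-d > 0$. On the primal side the subproblem (F)$^w$ has optimal value $d$, attained by $f^w_{(w^s,w^t)}=1$ and $f^w_r=0$. The dual (DF)$^w$ collapses (using $\nu^w_{w^t}=0$) to
\begin{align*}
\max\; \nu^w_{w^s}-\sigma^w_e \quad \text{s.t.}\quad \nu^w_{w^s}\le d+\sigma^w_e,\;\; \nu^w_{w^s}\le u,\;\; \sigma^w_e\ge 0.
\end{align*}
The choice $\sigma^w_e = u-d$ and $\nu^w_{w^s}=u$ is dual-feasible with objective $u-(u-d)=d$, hence dual-optimal by strong duality; and \eqref{cons:strongdual} holds automatically since $d\cdot 1 + u\cdot 0 = u - (u-d)$. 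Thus $(x,y,f,\nu,\sigma,\gamma^{max}=d)$ is feasible for (BCD\_R) and realizes $\sigma^w_e=u-d=u^w-d_{\mathcal{N}}(w)=\Sigma^w_e$.

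I do not anticipate a real obstacle here, as the construction is purely a matter of picking a simple instance: the subtlety lies only in simultaneously fulfilling the flow equations, the linking constraints $x_e\le y_i$, the dual feasibility inequalities, and the strong-duality constraint \eqref{cons:strongdual}. All of these are trivially satisfied in the example above, because the primal subproblem admits a unique shortest path of length $d$ and the dual has a one-parameter family of optima along which $\sigma^w_e$ can be pushed up to $u-d$. Since this value coincides with $\Sigma^w_e$, the bound from Proposition \ref{prop:validbound} cannot be improved in general, which is precisely the statement of the proposition.
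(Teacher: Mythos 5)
There is a gap, and it comes from what ``tight'' has to mean in this context. The bound $\Sigma^w_e$ is used as the big-$M$ constant in the McCormick linearization of $x_e\sigma^w_e$, which effectively imposes $\sigma^w_e\le\Sigma^w_e$ on the reformulated problem. Tightness therefore means that replacing $\Sigma^w_e$ by anything smaller, say $u^w-d_{\mathcal{N}}(w)-\epsilon$, destroys the validity of the reformulation on some instance, i.e.\ cuts off \emph{every} primal--dual certificate of the true optimum. Your two-node instance does not show this. You exhibit one feasible point of (BCD\_R) at which $\sigma^w_e=u^w-d_{\mathcal{N}}(w)$, but you yourself observe that the dual optima form a one-parameter family: $\nu^w_{w^s}=d+\sigma^w_e$ with $\sigma^w_e\in[0,u-d]$ all certify optimality of the shortest path. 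Hence even the bound $\Sigma^w_e=0$ leaves your instance correctly formulated --- the solver simply selects the certificate with $\sigma^w_e=0$ --- so your construction only proves that the bound is attained somewhere in the feasible region, not that it cannot be decreased.

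What is missing is an instance in which \emph{every} dual certificate of the optimum forces $\sigma^w_e\ge u^w-d_{\mathcal{N}}(w)$. The paper achieves this by choosing $e=\{w^s,w^t\}$ to be the edge realizing $d_{\mathcal{N}}(w)$ while the optimal design does \emph{not} build $e$ and leaves $w$ unserved, so that $f^w_r=1$ at the lower-level optimum. Strong duality then requires $\nu^w_{w^s}=u^w$ (since $\sum_e x_e\sigma^w_e\ge 0$ and the objective equals $u^w$), and the dual constraint \eqref{cons:duality1} for the unbuilt edge $e\in E^w$ gives $\sigma^w_e\ge \nu^w_{w^s}-d_{\mathcal{N}}(w)=u^w-d_{\mathcal{N}}(w)$ with no slack to redistribute. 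Any $\epsilon$-smaller cap then forces $f^w_r=0$ and renders the true optimal design infeasible in the linearized model. Your example cannot produce this effect because the edge is built and the pair is served, which is precisely the regime where the dual multiplier is free to be small.
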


\begin{proof}
	Let us fix a pair $w\in W$ and consider the particular situation for which there exists $e=\{w^s,w^t\}\in E^w\subseteq E$ and $d_{\mathcal{N}}(w)=d_{(w^s,w^t)}$. Let us suppose that there exists a better bound for $\sigma_{\{w^s,w^t\}}^w$ than $u^w-d_{\mathcal{N}}(w)$. That is, $\sigma_{\{w^s,w^t\}}^w\leq u^w-d_{\mathcal{N}}(w)-\epsilon$. By \eqref{cons:duality1}, 
	$\nu_{w_s}^w\leq d_a + \sigma_{\{w^s,w^t\}}^w \leq u^w-\epsilon$. Then, according to \eqref{cons:strongdual}, 
	\begin{align}
		\nu_{w_s}^w = \sum_{a\in A^w}d_a\,f^w_a+u^w\,f^w_r +\sum_{e\in E^w}x_e\,\sigma_e^w\leq u^w-\epsilon.
	\end{align}\label{equation:prop6}
	In order to satisfy the previous inequality, $f^w_r$ has to be set to $0$, which forces a solution network to have a path for $w$ (the corresponding flow variables take value equal to $1$). This situation can result in an infeasible solution by \eqref{eq:CD_2_bilevel} if the length of the assigned path is larger than its $u^w$ or even if there is no path.
	
	Let us show it with a small example. Let $\mathcal{N}$ be the following potential network.
	
	\begin{table}[ht]
	\begin{minipage}{0.45\textwidth}
%			\centering
			\begin{tabular}{cccc}
				\hline
				Origin & Destination & $u^w$ & $g^w$ \\
				\hline
				1 & 2 & 24 & 181 \\
				1 & 4 & 34 & 168 \\
				2 & 4 & 20 & 43 \\
				3 & 2 & 32 & 121\\ 
				\hline
			\end{tabular}
			\label{table:propositionbestbound}
	\end{minipage}
	\begin{minipage}{0.45\textwidth}
		\centering
		\begin{tikzpicture}[scale=1, auto,swap]
			\foreach \pos/\name in {{(3,4)/v_1}, {(3,1)/v_3}, {(6,4)/v_2}, {(6,1)/v_4}}
			\node[vertex] (\name) at \pos {$\name$};
			\node[vertex2]  at (3,0.5) {$10$};
			\node[vertex2]  at (3,4.5) {$8$};
			\node[vertex2]  at (6,0.5) {$8$};
			\node[vertex2]  at (6,4.5) {$7$};
			\foreach \source/\dest/\weight in {v_2/v_1/{(12,12)}, v_1/v_3/{(14,14)}, v_1/v_4/{(17,17)}, v_4/v_2/{(10,10)}, v_3/v_4/{(6,6)}}
			\path[edge] (\source) -- node[weight] {$\weight$} (\dest);
		\end{tikzpicture}
	\end{minipage}
			\end{table}
	
	Being $\lambda = 20$ and $\alpha\,C_{total}=63$, the optimal solution for \textit{(Bilevel-CD)} is
	
	\begin{center}
		\begin{tikzpicture}[scale=0.4, auto,swap]
			\foreach \pos/\name in {{(3,4)/v_1}, {(3,1)/v_3}, {(6,4)/v_2}, {(6,1)/v_4}}
			\node[vertex] (\name) at \pos {$\name$};
			\foreach \source/\dest in {v_1/v_3, v_2/v_4, v_3/v_4}
			\path[edge] (\source) -- (\dest);
		\end{tikzpicture}    
	\end{center}
	which means that $x_{\{1,2\}}=0$. 
	By Proposition \ref{prop:best_bound}, it is set $f_{r}^{(1,2)}=0$, which forces to assign $w$ to the solution network. This solution network contains a path for $w$ but it is larger than its $u^w$, which results in an infeasibility by \eqref{eq:CD_2_bilevel} for not taking the optimal mode.
\end{proof}

To conclude, as we discussed, the generalized-center tends to be suboptimal in efficiency and is not necessarily characterized as a center network. We introduced a constraint to enhance the efficiency of the generalized-center solution. Specifically, for cases where $\lambda>1$, we augment the (BCD) formulation with the following constraint:
\begin{equation}
	\frac{1}{G}\sum_{w\in W} g^w\left( \sum_{a \in A^w} d_a\,f_a^w +u^w\,f_r^w \right) \leq(1+\Delta)F_m(\mathcal{S}_m),
\end{equation}
\noindent being $F_m(\mathcal{S}_m)$ the objective value of the median network (the objective value for (CD) for $\lambda=0$).

\subsection{An algorithm to describe the set \texorpdfstring{$\mathcal{PO}^{\alpha}_2$}{}}\label{subsec:formulation_PO2}

The set $\mathcal{PO}^{\alpha}_2$ can be exhaustively described as a function of $\lambda\in(0,1)$ with minimization of the weighted function $\Bar{H}_{\lambda}(\,\cdot\,)$. In the case of a non-unique optimal solution, we select the one that has minimum $H_{\lambda}(\,\cdot\,)$ value. That is, firstly, to minimize $\Bar{H}_{\lambda}(\,\cdot\,)$ we solve the problem
\begin{align}
	\quad \min\, & \quad \mu \label{eq:lexmin1_obj} \\
	\text{ s.t.} & \quad \mu\geq \lambda\,F_c(\mathcal{S}), \label{cons:lexmin1_2}\\
	& \quad \mu\geq (1-\lambda)\,F_m(\mathcal{S}), \label{cons:lexmin1_3}\\
	& \quad \mathcal{S}\in\mathcal{N}^{\alpha}, \label{cons:lexmin1_4}\\
	& \quad \mu\geq 0. \label{cons:lexmin1_5}
\end{align}

Then, being $V^*$ the objective value of the problem formulation \eqref{eq:lexmin1_obj}-\eqref{cons:lexmin1_5}, the following second problem is solved for regulation purposes:
\begin{align}
	\quad \min\, & \quad \lambda F_c(\mathcal{S}) + (1-\lambda)F_m(\mathcal{S}) \label{eq:lexmin3_obj} \\
	\text{ s.t.} & \quad \lambda\, F_c(\mathcal{S}) \leq V^*, \label{eq_C3:lexmin3_1}\\
	& \quad (1-\lambda)\, F_m(\mathcal{S}) \leq V^*, \label{cons:lexmin3_2}\\
	& \quad \mathcal{S}\in\mathcal{N}^{\alpha},\label{cons:lexmin3_3}
\end{align}

In terms of sets $N$, $E$ and $W$, the previous formulations are equivalent to the following ones
\begin{align}
	\mbox{(MCD\_1)} \quad \min\, & \quad \mu \\
	\text{ s.t.} & \quad \mu\geq \lambda\,\left( \sum_{a \in A^w} d_a\,f_a^w +u^w\,f_r^w \right), \hspace{3cm} w\in W,\\
	& \quad \mu\geq (1-\lambda)\,\frac{1}{G}\sum_{w\in W} g^w\left( \sum_{a \in A^w} d_a\,f_a^w +u^w\,f_r^w \right), \\
	& \quad \mu\geq 0, \\
	& \quad \text{constraints }\eqref{cons:budget},\eqref{cons:location},\eqref{cons:flow},\eqref{cons:capacity},\eqref{cons:var}
\end{align}
\begin{align}
	\mbox{(MCD\_2)} \quad \min\, & \quad \lambda \left( \sum\limits_{a \in A^w} d_a\,f_a^w +u^w\,f_r^w \right) + (1-\lambda)\frac{1}{G}\sum\limits_{w\in W} g^w\left( \sum\limits_{a \in A^w} d_a\,f_a^w +u^w f_r^w \right) \\
	\text{ s.t.} & \quad 0\leq V^* - \lambda\,\left( \sum_{a \in A^w} d_a\,f_a^w +u^w\,f_r^w \right), \hspace{2cm} w\in W,\\
	& \quad 0\leq V^* - (1-\lambda)\,\frac{1}{G}\sum_{w\in W} g^w\left( \sum_{a \in A^w} d_a\,f_a^w +u^w\,f_r^w \right), \\
	& \quad \mu\geq 0, \\
	& \quad \text{constraints }\eqref{cons:budget},\eqref{cons:location},\eqref{cons:flow},\eqref{cons:capacity},\eqref{cons:var}.
\end{align}

To obtain all the existing correspondences between all the solution networks in $\mathcal{PO}^{\alpha}_2$ and its associated value $\lambda\in(0,1)$, we can use the bisection method for $\lambda$ parameter in the interval $(0,1)$. %For that, at each iteration, the current solution network obtained is compared with the last two solutions computed previously.

\section{Quality of the solutions with a numerical illustration}\label{sec:delta_results}

In this section, we illustrate the quality of the solutions for the different solution concepts developed in Section \ref{sec:problem_definition} through an example. The instance used in this part has been generated randomly in the same manner as in Section \ref{subsec:comp_results}, being the network resultant of a planar graph consisting of 20 nodes, shown in Figure \ref{fig:potential_graph}.

\begin{figure}[ht]
	\centering
	\hspace*{-0.3cm}\includegraphics[scale=0.35]{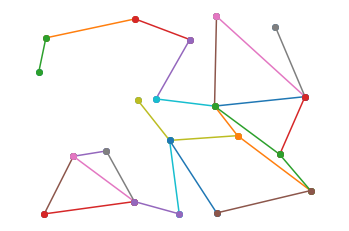}
	\caption{Potential graph used in Section \ref{sec:delta_results}.}
	\label{fig:potential_graph}
\end{figure}

The quality of the solution $\mathcal{S}$ is presented in terms of:
\begin{itemize}
	\item the minimum length of a path among all the pairs: $\ell_{min}({\mathcal{S}}) = \min_w \ell_{\mathcal{S}}(w)$, 
	\item the maximum length of a path among all the pairs: $\ell_{max}({\mathcal{S}}) = \min_w \ell_{\mathcal{S}}(w)$, 
	\item the average length of the paths: $\overline{\ell}(\mathcal{S}) = \frac{1}{|W|}\sum_w \ell_{\mathcal{S}}(w)$, 
	\item the mean absolute difference of the path's length:\\ MAD$ (\mathcal{S}) = \frac{1}{2}\sum\limits_{w\in W}\sum\limits_{\substack{w'\in W:\\ w\neq w'}} g^wg^{w'}|\ell_{\mathcal{S}}(w) -\ell_{\mathcal{S}}(w')|$,\\
    which has just been studied and taken into account in \cite{lopez2003sum} and \cite{mesa2003improved},
	\item the O/D pairs served refers to the number of O/D pairs that prefer the solution network over the one provided by the private mode. This is denoted by O/D\%.
\end{itemize}

\begin{table}[ht]
	\centering
	\begin{tabular}{cc c r r r r r}
		\hline
		$\lambda$ & $\Delta$ (\%) &$\ell_{min}$ &$\ell_{max}$ & $\overline{\ell}$ &  MAD& O/D\%\\ \hline
		0    & -    & 4 & 100 & 45.037  & 12.351 & 28.187 \\
		0.25 & -    & 4 & 100 & 45.037  & 12.351 & 28.187 \\
		0.5  & -    & 4 & 96  & 47.149 & 12.375 & 27.516 \\
		0.75 & -    & 4 & 96  & 47.149 & 12.375 & 27.516 \\\hline
		1    & -    & 4 & 96  & 47.383 & 12.187 & 6.375  \\
		1    & 0    & 4 & 100 & 45.037  & 12.351 & 28.187 \\
		1    & 3 & 4 & 100 & 45.981 & 12.322 & 24.832 \\
		1    & 5 & 4 & 96  & 47.149 & 12.375 & 23.154 \\
		1    & 10  & 4 & 96  & 47.383 & 12.187 & 8.389  \\
		1    & 15 & 4 & 96  & 47.383 & 12.187 & 8.389  \\\hline
		500  & -    & 4 & 96  & 48.655 & 12.535 & 21.476 \\
		500  & 0    & 4 & 100 & 45.037  & 12.351 & 28.187 \\
		500  & 3 & 5 & 100 & 46.386  & 12.279 & 24.832 \\
		500  & 5 & 4 & 96  & 47.283 & 12.859 & 26.174 \\
		500  & 10  & 4 & 96  & 48.655 & 12.535 & 21.476 \\
		500  & 15 & 4 & 96  & 48.655 & 12.535 & 21.476
		\\\hline
		lex &-& 4 &96 &47.149 & 12.375&  27.516 \\\hline
	\end{tabular}
	\caption{Quality of solutions for  $\lambda$-cent-dian and generalized-center problems using a random instance with $|N|=20$ and $\alpha=0.4$.}
	\label{t:numerical_illustration}
\end{table}

Table \ref{t:numerical_illustration} summarizes the different values of the performance index mentioned above and is arranged as follows. The first set of rows corresponds to the $\lambda$-cent-dian problem for $\lambda \in \{0,0.25,0.5, 0.75\}$. Secondly, we consider the center problem ($\lambda =1$). In the third set of rows, the approximation of the generalized-center concept with $\lambda=500$ is considered. For the center and generalized-center problems, the effect of adding the efficiency constraint \eqref{eq:delta_constraint} with $\Delta\in\{0\%,3\%,5\%, 10\%,15\%\}$ is considered. In the last row, we consider the lexicographic cent-dian \eqref{eq:lexmin}. Note that for the particular case with $\lambda=1$ and $\Delta=0$, we are solving the inverse lexicographic problem: first, we solve the median problem by optimizing $F_m(\cdot)$, and subsequently, we seek the best center among the optimal solutions of the median. We have solved formulations (CD) and (BCD) exposed in Section \ref{sec:problem_formulation} with the direct use of Python 3.8 and the optimization solver \texttt{CPLEX} 12.10. 

First, note that only two different solutions are obtained by solving the problem for range $\lambda\in(0,1)$. These solutions belong to the set $\mathcal{PO}_2^{\alpha}$. In the first four rows of Table \ref{t:numerical_illustration} is noted the compromise between the median and the center objectives. For $\lambda \in \{0, 0.25\}$, the maximum travel time ($\ell_{max}$) is 100, which decreases to 96 when $\lambda \in \{0.5, 0.75\}$ is considered. As it is expected, the opposite effect is observed with the median objective $\overline{\ell}(\mathcal{S})$, which increases from 45.037 to 47.149. We observe the same effect with the percentage of O/D pairs served. 

For $\lambda=1$ and $\Delta=0$,  the formulation seeks the median solution with the minimum center value.   Similarly, when $\lambda=500$ and $\Delta=0$, the optimal solution corresponds to the median network with the lowest generalized-center value. These solution networks are exactly the solution for the configuration with $\lambda\in \{0, 0.25\}$. Comparing configurations with $\Delta>0$ for both $\lambda=1$ and $\lambda=500$, we observe that with, $\lambda=1$ the average travel time increases, being greater than in the median solution (as expected) and smaller than in the generalized-center solutions. Additionally, in both cases, employing the efficiency constraint results in more O/D pairs being served than when it is not used, provided that the parameter $\Delta$ is not excessively large.  Furthermore, for a fixed value of $\Delta$, the percentage of O/D pairs served by the generalized-center solution is either greater than or equal to that served by the center solution. Regarding the mean absolute difference measure, no distinct trend can be identified; we simply observe that, in this case, the center solution has the lowest value for this measure.

Besides, we observe that comparing the center solutions between them, there are some dominated ones. More specifically, the center solution without the efficiency constraint ($\Delta$=\,-\,) is completely dominated by the solution where this constraint is used with $\Delta=15$. On the other hand, a similar situation arises when comparing the generalized-center solutions among themselves.

Making a general comparison, we highlight that there are some generalized-center solutions non-dominated by the rest. The generalized-center solution with $\Delta=3$ has smaller values of M.A.D than any of $\lambda$-cent-dian solutions with $\lambda<1$. If we compare it with the center solutions, we see that two situations can occur: i) the center solution has a smaller M.A.D value but a bigger average value and a lower percentage of demand covered, or ii) if the center has a worse M.A.D value, then it also has better values for some of the inequality measures evaluated.

In a general comparison, we emphasize that some generalized-center solutions are not dominated by the rest of the solutions in Table \ref{t:numerical_illustration}. The generalized-center solution with $\Delta=3$ has smaller values of M.A.D than any of the $\lambda$-cent-dian solutions with $\lambda<1$. When comparing it with the center solutions, two situations may arise: i) the center solution has a smaller M.A.D value but a larger Average value and a lower percentage of demand covered, or ii) if the center solution has a worse M.A.D value, it also has better values for some of the inequality measures evaluated.

\section{Algorithmic discussion for \texorpdfstring{$\lambda\in[0,1]$}{}}\label{sec:alg_discussion}

In this section we present how for $\lambda \in [0,1]$, the formulation (CD) possesses the suitable separable structure needed for a Benders decomposition approach. Indeed, when the design variables of (CD), namely $x$, $y$ and $\gamma^{max}$, are fixed, the problem can be divided into $|W|$ sub-problems. Each of these sub-problems establishes the flow variables $f^w$ as a linear problem by relaxing their integrality condition. 

However, it is important to note that this property is not present in (BCD), where the flow variables must also meet a non-convex optimality condition. Since (CD) is not a valid formulation for $\lambda >1$, the Benders decomposition developed in this section is only appropriate for $\lambda \in [0,1]$.

\subsection{Benders Decomposition with facet-defining cuts for \texorpdfstring{$\lambda\in[0,1]$}{}}\label{subsec:benders_approach}

Formulation (CD) involves a large number of flow variables when the set of O/D pairs is extensive. To address this issue, we explore a stabilized Benders decomposition based on the concepts presented in \cite{conforti2019facet2} to generate \textit{facet-defining cuts}. This method has been previously examined and developed for covering problems in \cite{cordeau2019benders} and \cite{bucarey2022benders}. 
We investigate an implementation of the branch-and-Benders-cut algorithm (\texttt{B\&BC}) for the case $\lambda\in[0,1]$.

To generate such cuts, we first need to relax the integrality condition on the flow variables $f^w_a$ and $f^w_r$. Proposition \ref{prop_C3:projection} shows that this can be done without loss of generality. Let (CD\_R) denote the formulation (CD) in which constraints $f_a^w\in\{0,1\},w\in W, a \in A\cup\{r\}$ are replaced by non-negativity constraints, i.e.
\begin{equation}
	f^w_a \geq 0, w \in W, a \in A\cup\{r\}.
\end{equation}

Let $Q$ be a set of points $(\boldsymbol{x},\boldsymbol{z}) \in \mathbb{R}^n \times \mathbb{R}^m$. Then, the projection of $Q$ onto the $x$-space, denoted by $Proj_x(Q)$, is the set of points given by: $Proj_{\boldsymbol{x}} (Q) = \{\boldsymbol{x} \in \mathbb{R}^m: (\boldsymbol{x},\boldsymbol{z}) \in Q \text{ for some } \boldsymbol{z} \in \mathbb{R}^n\}.$ Let us denote by $\mathcal{F}(CD)$ the set of feasible points of formulation $(CD)$.

\begin{proposition}\label{prop_C3:projection}
	The projections of (CD) and (CD\_R) onto the $\boldsymbol{f}$-space coincide.
	$$Proj_{\boldsymbol{x},\boldsymbol{y},\boldsymbol{\gamma^{max}}}(\mathcal{F}(CD)) = Proj_{\boldsymbol{x},\boldsymbol{y},\boldsymbol{\gamma^{max}}}(\mathcal{F}(CD\_R)).$$
\end{proposition}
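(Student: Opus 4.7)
The inclusion $\mathrm{Proj}_{x,y,\gamma^{max}}(\mathcal{F}(\mathrm{CD})) \subseteq \mathrm{Proj}_{x,y,\gamma^{max}}(\mathcal{F}(\mathrm{CD\_R}))$ is immediate, since binary flow values are in particular non-negative, so every feasible point of (CD) is feasible for (CD\_R). The substantive content is the reverse inclusion: given any triple $(x^*,y^*,\gamma^{max*})$ realized by some continuous flow vector $\boldsymbol{f}^*$ feasible for (CD\_R), I would construct, for each $w \in W$ separately, a binary flow $\tilde{\boldsymbol{f}}^w$ satisfying constraints \eqref{cons:flow}, \eqref{cons:capacity}, and \eqref{cons:center} with the same $(x^*,y^*,\gamma^{max*})$, which would certify membership in $\mathrm{Proj}_{x,y,\gamma^{max}}(\mathcal{F}(\mathrm{CD}))$.

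\textbf{Per-pair construction.} Fix $w \in W$. The vector $\boldsymbol{f}^{*,w}$ encodes a non-negative flow of value $1$ from $w^s$ to $w^t$ in the extended digraph (with the artificial arc $r$), with the capacity constraint $f_a^w + f_{\hat{a}}^w \le x_e^*$. In particular, since $x_e^* \in \{0,1\}$, whenever $x_e^* = 0$ we get $f_a^{*,w} = f_{\hat{a}}^{*,w} = 0$, so $\boldsymbol{f}^{*,w}$ only uses arcs whose supporting edge is selected. By the standard flow-decomposition theorem, $\boldsymbol{f}^{*,w}$ decomposes into a convex combination of simple $w^s$--$w^t$ paths $P_1,\dots,P_k$ with weights $\alpha_i > 0$, $\sum_i \alpha_i = 1$, plus a non-negative combination of directed cycles. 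The length of the flow is then
\begin{equation}
    \sum_{a\in A^w} d_a f_a^{*,w} + u^w f_r^{*,w} \;\ge\; \sum_{i=1}^{k} \alpha_i\, \mathrm{len}(P_i) \;\ge\; \min_i \mathrm{len}(P_i),
\end{equation}
because all $d_a \ge 0$ and $u^w \ge 0$ make the cycle contribution non-negative. Let $P_{i^*}$ attain this minimum and define $\tilde{\boldsymbol{f}}^w$ as the binary indicator of $P_{i^*}$ (setting $\tilde f_r^w = 1$ if $P_{i^*}$ uses the artificial arc, and $\tilde f_r^w = 0$ otherwise). Flow conservation \eqref{cons:flow} holds because $P_{i^*}$ is an $s$-$t$ path; the capacity constraint \eqref{cons:capacity} holds since $P_{i^*}$ is simple and only uses arcs $a$ with $x_{e(a)}^* = 1$; and the maximum-distance constraint \eqref{cons:center} holds because
\begin{equation}
    \sum_{a\in A^w} d_a \tilde f_a^w + u^w \tilde f_r^w = \mathrm{len}(P_{i^*}) \;\le\; \sum_{a\in A^w} d_a f_a^{*,w} + u^w f_r^{*,w} \;\le\; \gamma^{max*}.
\end{equation}

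\textbf{Conclusion.} Assembling the $\tilde{\boldsymbol{f}}^w$ over $w \in W$ alongside the unchanged $(x^*,y^*,\gamma^{max*})$ yields a point in $\mathcal{F}(\mathrm{CD})$ whose projection is precisely $(x^*,y^*,\gamma^{max*})$, establishing the non-trivial inclusion.

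\textbf{Anticipated obstacle.} The only subtlety is the capacity constraint \eqref{cons:capacity}: one must ensure the extracted path uses each edge in at most one direction and only across edges with $x_e^* = 1$. Both properties follow cleanly once one observes (i) that $\boldsymbol{f}^{*,w}$ is supported on the subgraph induced by $\{e : x_e^* = 1\}$ (plus $r$), and (ii) that flow decomposition can be taken with simple paths. An equivalent and shorter argument would be to invoke directly the integrality of the shortest-path LP polytope (the constraint matrix of flow conservation combined with arc upper bounds $f_a \le x_e^*$ is totally unimodular), which immediately produces an integer optimum whose objective does not exceed the continuous-feasible cost $\gamma^{max*}$.
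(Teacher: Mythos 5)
Your proof is correct and follows essentially the same route as the paper's: the easy inclusion from $\mathcal{F}(\mathrm{CD})\subseteq\mathcal{F}(\mathrm{CD\_R})$, then flow decomposition of each $\boldsymbol{f}^{*,w}$ into paths plus cycles and selection of a single path whose length is bounded by the (cycle-inflated) flow length and hence by $\gamma^{max}$. If anything, your treatment of a fractional $f_r^w$ as just another path in the decomposition is slightly more careful than the paper's appeal to ``the structure of the objective function'' (which concerns optimality rather than feasibility of the projected point), but the underlying argument is the same.
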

\begin{proof} 
	\noindent First, $\mathcal{F}(CD) \subseteq \mathcal{F}(CD\_R)$ implies $Proj_{\boldsymbol{x},\boldsymbol{y},\boldsymbol{\gamma^{max}}}(\mathcal{F}(CD)) \subseteq Proj_{\boldsymbol{x},\boldsymbol{y},\boldsymbol{\gamma^{max}}}(\mathcal{F}(CD\_R))$. Second, let $(\boldsymbol{x},\boldsymbol{y},\boldsymbol{\gamma^{max}})$ be a point belonging to $Proj_{\boldsymbol{x},\boldsymbol{y},\boldsymbol{\gamma^{max}}}(\mathcal{F}(CD\_R))$. Due to the structure of the objective function, every O/D pair will select one of the two modes, the shortest one. That is, $f^w_r$ will take the value of $0$ or $1$. If $f_r^w = 1$ then $\boldsymbol{f}^w=0$. In the case where $f_r^w=0$, there exists a flow $f^w_a \geq 0$ satisfying \eqref{cons:flow} and \eqref{cons:capacity} that can be decomposed into a convex combination of flows on paths from $w^s$ to $w^t$ plus possibly some cycles. Further,  $x_e =1$ for all edges belonging to these paths which have the same length. Given that the flow $f_a^w$ also satisfies \eqref{cons:center}, then a flow of value $1$ on one of the paths in the convex combination must satisfy this constraint. Hence, by taking $f_a^w$ equal to $1$ for the arcs belonging to this path and to $0$ otherwise, we show that $(\boldsymbol{x},\boldsymbol{y},\boldsymbol{\gamma^{max}})$ also belongs to $Proj_{\boldsymbol{x},\boldsymbol{y},\boldsymbol{\gamma^{max}}}(\mathcal{F}(CD))$. 
\end{proof}

Based on Proposition \ref{prop_C3:projection}, we propose a Benders decomposition where variables $f^w_a$ and $f^w_r$ are projected out from the model and replaced by Benders facet-defining cuts, generated on-the-fly. Following the Benders decomposition Theory, since flow variables appear in the objective function of (CD), we have introduced an incumbent variable $\zeta^w$ for each O/D pair $w\in W$ to denote the expression $\sum_{a\in A^w}d_a\,f_a^w+u^w\,f_r^w$. Then, the master problem that we solve is
\begin{align}
	\mbox{(M\_CD)} \quad \min & \quad \lambda\gamma^{max} + (1-\lambda)\frac{1}{G}\sum_{w\in W} g^w\zeta^w\\
	\text{ s.t.} & \quad \sum\limits_{e\in E}c_e\,x_e +\sum\limits_{i\in N}b_i\,y_i \leq \alpha\,C_{total},\\
	& \quad x_e\leq y_i, \hspace{5cm} e\in E, \, i\in\{e^s,e^t\},\\
	& \quad \zeta^w\leq\gamma^{max}, \hspace{6.4cm} w\in W,\\
	&\quad +\{\mbox{Benders Cuts }(\boldsymbol{x},\boldsymbol{y},\boldsymbol{f},\boldsymbol{\zeta})\}. \nonumber\\
	& \quad x_e, \, y_i\, \in \{0,1\}, \hspace{0.5cm} \gamma^{max},\zeta^w\geq 0, \hspace{0.9cm} i\in N, \, e\in E, \, w\in W.
\end{align}

In \cite{conforti2019facet2}, the authors derive a cut-generating LP, the optimal solution of which almost surely induces a facet-defining Benders feasibility cut. To achieve this, it is necessary to generate a feasible solution in the interior of the convex hull of the feasible domain. Then, this cut-generating LP scheme finds the {\it best} cut that lies in the convex combination of such interior point and the point to be separated. 

We obtain this interior point by first performing a preprocessing method to delete all the O/D pairs in the network that will not be served by any feasible solution of (CD). This is equivalent to eliminating all the O/D pairs that induce the implicit equality $f^w_r =1$. We then show that the convex hull of the resulting problem is full-dimensional by exposing a sufficient number of linearly independent feasible points. The average of such points is a point living in the interior of the convex hull of the feasible space.

We briefly describe the preprocessing scheme as follows. The first part involves constructing, for each pair $w\in W$, a subgraph $\mathcal{N}^w$ that includes only those nodes and edges present in any path from $w^s$ to $w^t$ that is shorter than or equal to $u^w$. In the second part, we identify the set of O/D pairs deemed too expensive to serve, denoted as $\Bar{W}$. Specifically, these are pairs without a path from $w^s$ to $w^t$ in $\mathcal{N}^w$ that satisfies both: i) its building cost is less than $\alpha\, C_{total}$; and ii) its length is less than $u^w$. The mode choice decision for this set is fixed to $f_r^w=1$ and deleted from $W$. For a more detailed explanation, we refer to \cite{bucarey2022benders}.

The following Proposition \ref{prop_C3:full_dimensional} and its proof gives us a way to compute $|N| + |E| + |W| + 2$ linearly independent points. In consequence, the average of these points is an interior point of the convex hull of $Proj_{\boldsymbol{x},\boldsymbol{y},\boldsymbol{\gamma^{max}},\boldsymbol{\zeta}}(\mathcal{F}(CD\_R))$.

\begin{proposition} \label{prop_C3:full_dimensional}
	After preprocessing, the convex hull of $Proj_{\boldsymbol{x},\boldsymbol{y},\boldsymbol{\gamma^{max}},\boldsymbol{\zeta}}(\mathcal{F}(\text{CD}\_R))$ is full-dimensional. 
\end{proposition}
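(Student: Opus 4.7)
The projection space has dimension $d = |E| + |N| + |W| + 1$ (one coordinate per $x_e$, $y_i$, $\zeta^w$, plus $\gamma^{max}$), so the plan is to exhibit $|N|+|E|+|W|+2$ affinely independent feasible points in $Proj_{\boldsymbol{x},\boldsymbol{y},\boldsymbol{\gamma^{max}},\boldsymbol{\zeta}}(\mathcal{F}(\text{CD}\_R))$. I would fix a base feasible point $P^0$ and then produce one further point per coordinate direction, verifying affine independence by reading off the displacement vectors $P^\bullet - P^0$ coordinate by coordinate.

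For the base, take $P^0$ to be the empty network ($x=0$, $y=0$) with every O/D pair on its private mode ($f^w_r=1$, $f^w_a=0$) and $\gamma^{max}=M$ for some $M \ge \max_{w} u^w$, so that $\zeta^w = u^w$. The other points are: $P^{\gamma}$ identical to $P^0$ except $\gamma^{max}=M+1$; for each $i \in N$, $P^i$ identical to $P^0$ except $y_i=1$; for each $e=\{i,j\} \in E$, $P^e$ identical to $P^0$ except $x_e = y_i = y_j = 1$; and for each $w \in W$, $P^w$ built on the preprocessing-certified feasible path $\pi^w$, with $w$ routed along $\pi^w$ (giving $\zeta^w = L^w := \text{length}(\pi^w) \le u^w$) and every $w' \neq w$ kept on its private mode. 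Feasibility of $P^i$ and $P^e$ rests on $b_i \le \alpha C_{total}$ and $c_e + b_i + b_j \le \alpha C_{total}$ respectively, which I would secure by extending the preprocessing to discard any node or edge that cannot appear in a feasible subgraph (parallel to the treatment of $\bar W$). Feasibility of $P^w$ is immediate from the definition of $\pi^w$.

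Setting $\alpha_0\, v^\gamma + \sum_i \beta_i\, v^i + \sum_e \gamma_e\, v^e + \sum_w \delta_w\, v^w = 0$ with $v^\bullet := P^\bullet - P^0$, I would peel off coordinates in order: the $\gamma^{max}$-entry forces $\alpha_0 = 0$; for each $w$, the $\zeta^w$-entry equals $\delta_w(L^w - u^w)$ and thus forces $\delta_w = 0$; once the $\delta_w$'s vanish, the only remaining contributor to each $x_e$-entry is $\gamma_e$, forcing $\gamma_e = 0$; and finally the only remaining contributor to each $y_i$-entry is $\beta_i$, forcing $\beta_i = 0$. The main obstacle, and the only genuinely nontrivial step, is the degenerate case $L^w = u^w$ in which the crucial $\zeta^w$-entry of $v^w$ collapses to zero. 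I would handle this by strengthening the preprocessing to move any pair $w$ with $d_{\mathcal{N}}(w) = u^w$ into $\bar W$, since such a pair is at best indifferent between the two modes and may be fixed to private mode without loss of generality; the remaining pairs then satisfy $L^w < u^w$ strictly, closing the argument.
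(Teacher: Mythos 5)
Your proof follows essentially the same strategy as the paper's: a base point (empty network, every pair on its private mode), one perturbed feasible point per coordinate, and a triangular elimination over the coordinates in the order $\gamma^{max}$, $\zeta^w$, $x_e$, $y_i$. The one substantive divergence is the $\zeta^w$ direction: you perturb $\zeta^w$ \emph{downward} to the path length $L^w$, which degenerates when $L^w = u^w$ and forces you to strengthen the preprocessing so as to discard pairs with $d_{\mathcal{N}}(w)=u^w$; the paper instead perturbs $\zeta^w$ \emph{upward} to $2u^w$ (treating $\zeta^w$ as the epigraph variable of the Benders master, bounded above only by $\gamma^{max}$), which makes the degenerate case disappear and keeps the $\zeta^w$-entry of the path-based point nonzero unconditionally, at no extra assumption. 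Your other strengthening --- discarding nodes and edges whose construction cost alone exceeds $\alpha\,C_{total}$ --- addresses a point the paper's proof silently assumes; without it the single-node and single-edge points need not satisfy the budget constraint (and indeed $y_i=0$ would be an implicit equality, so the claim would fail), so flagging it is added rigor rather than a gap in your argument.
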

\begin{proof}
	See Appendix \ref{appendix:proof}.
\end{proof}

Given an interior point \((\boldsymbol{x}^{in}, \boldsymbol{y}^{in}, \boldsymbol{\gamma^{max}}^{in}, \boldsymbol{\zeta}^{in})\) and an exterior point \((\boldsymbol{x}^{out}, \boldsymbol{y}^{out}, \boldsymbol{\gamma^{max}}^{out}, \boldsymbol{\zeta}^{out})\), which is a solution to the LP relaxation of the current restricted master problem \(Proj_{\boldsymbol{x},\boldsymbol{y},\boldsymbol{\gamma^{max}}}(\mathcal{F}(CD\_R))\), we generate a cut that induces either a facet or an improper face of the polyhedron defined by the LP relaxation of \(Proj_{\boldsymbol{x},\boldsymbol{y},\boldsymbol{\gamma^{max}},\boldsymbol{\zeta}}\mathcal{F}(CD)\). We denote the difference \(\boldsymbol{x}^{out} - \boldsymbol{x}^{in}\) by \(\Delta \boldsymbol{x}\) and define \(\Delta \boldsymbol{y}\), \(\Delta\boldsymbol{\gamma^{max}}\), and \(\Delta \boldsymbol{\zeta}\) analogously. The goal is to find the point furthest from the interior point that is feasible to the LP-relaxation of \(Proj_{\boldsymbol{x},\boldsymbol{y},\boldsymbol{z}}\mathcal{F}(CD)\) and lies on the line segment between the \textit{interior point} and the \textit{exterior point}. This point takes the form \((\boldsymbol{x}^{sep}, \boldsymbol{y}^{sep}, \boldsymbol{\gamma^{max}}^{sep}, \boldsymbol{\zeta}^{sep}) = (\boldsymbol{x}^{out}, \boldsymbol{y}^{out}, \boldsymbol{\gamma^{max}}^{out}, \boldsymbol{\zeta}^{out}) - \mu (\Delta \boldsymbol{x}, \Delta \boldsymbol{y}, \Delta \boldsymbol{\gamma^{max}}, \Delta \boldsymbol{\zeta})\). For each O/D pair \(w\), we cast the cut-generation problem as:
\begin{align}
	\mbox{(SP)}^w \, \min\limits_{\boldsymbol{f},\mu}&\, \mu\\
	\text{ s.t.} &\,\, f_r+\sum\limits_{a\in \delta^+(i)}f_a - \sum\limits_{a\in \delta^-(i)}f_a=
	\begin{cases}
		1, &\text{if $i = w^s$},\\
		0, & \text{otherwise}, 
	\end{cases} \hspace{1.8cm} i\in N,\label{eq:subpb_facets_flow}\\
	&\,\, f_a + f_{a'} \leq x_e^{out}-\mu\,\Delta x_e, \hspace{0.5cm} e=\{i,j\} \in E : a=(i,j), a'=(j,i),\label{eq:subpb_facets_loc_alloc}\\
	&\,\, \sum_{a \in A^w} d_a\, f_a + u\,f_r \leq\zeta^{out}-\mu\,\Delta\zeta, \label{eq:subpb_facets_utility}\\
	&\,\, 0 \leq \mu \leq 1, \\ 
	&\,\, f_a \geq 0, \hspace{7.7cm} a\in A.
\end{align}

In order to obtain the Benders feasibility cut we solve its associated dual. Given that (SP)$^w$ is always feasible ($\mu = 1$ is feasible) and that its optimal value is lower bounded by 0, then, both (SP)$^w$ and its associated dual problem have always finite optimal solutions. Whenever the optimal value of $\mu$ is 0, $(\boldsymbol{x}^{out}, \boldsymbol{y}^{out},\boldsymbol{\gamma^{max}}^{out}, \boldsymbol{\zeta}^{out})$ is feasible. A cut is added if the optimal value of the dual subproblem is strictly greater than 0. This cut has the form 
\begin{equation}
	- \sum\limits_{e\in E}\sigma_e\,x_e - \upsilon\,\zeta \leq -\phi_{w^s},\label{eq:cut-facets}
\end{equation}
\noindent being $\phi_i$, $\sigma_e$ and $\zeta$ the optimal dual variables associated to constraints \eqref{eq:subpb_facets_flow}, \eqref{eq:subpb_facets_loc_alloc} and \eqref{eq:subpb_facets_utility}, respectively.

\subsection{Computational results}\label{subsec:comp_results}

We conduct our experiments on a computer equipped with an Intel Core i$5$-$7300$ CPU processor, with $2.50$ gigahertz $4$-core, and $16$ gigabytes of RAM memory. The operating system used was 64-bit Windows 10. The codes were implemented in Python 3.8 and executed using the \texttt{CPLEX} 12.10 solver through its Python interface. The \texttt{CPLEX} parameters were set to their default values, and the model was optimized in single-threaded mode.

We generate random instances to test the computational experience as follows. We consider planar networks with a set of \(n=40\) nodes. Nodes are placed in a grid of \(n\) square cells, each measuring 10 units per side. For each cell, a point is randomly generated near the center. We construct a planar graph with the maximum number of edges, deleting each edge with a probability of 0.2. This procedure is replicated 10 times to ensure the number of nodes remains constant while the number of edges may vary, resulting in 10  underlying networks. Once a random instance \(\mathcal{N}\) is generated, construction costs \(b_i\), for \(i \in N\), are randomly generated according to a uniform distribution \(\mathcal{U}(7, 13)\), yielding an average cost of 10 monetary units per node. The construction cost of each edge \(e \in E\), denoted \(c_e\), is set to its Euclidean length, implying that building the links costs 1 monetary unit per length unit. The node and edge costs are rounded to integer numbers. Regarding the budget, we set \(\alpha \in \{0.25, 0.4\}\), meaning that the available budget equals 25\% or 50\% of the cost of building the entire underlying network considered. Additionally, to construct the set of O/D pairs \(W\), all possible pairs are taken into account, resulting in a set \(W\) composed of \(n(n - 1)\) elements. For each \(w \in W\), parameter \(u^w\) is set to twice the Euclidean length between \(w^s\) and \(w^t\). Finally, the demand \(g^w\) for each O/D pair \(w\) is randomly generated according to the uniform distribution \(\mathcal{U}(10, 300)\).

\begin{table}[ht]
%	\centering
	\begin{tabular}{ccccccc}
		\hline
		\multirow{2}{*}{$\alpha$}     & \multirow{2}{*}{Block} & \multicolumn{5}{c}{$\lambda$}      \\ \cline{3-7} 
		&                        & 0 & 0.25 & 0.5 & 0.75 & 1 \\ \hline
		\multirow{3}{*}{0.25} & \texttt{A}                      & 0  &  1    &  2   &   3   & 0  \\
		& \texttt{B}                      &  3 &   2   &   2  &    0  & 0  \\
		& \texttt{C}                      & 7  &    7  &    6 &    7  & 10  \\ \hline
		\multirow{3}{*}{0.4}  & \texttt{A}                      & 2  &  2    &   3  &   3   &  0 \\
		& \texttt{B}                      &  4 &   4   &    4 &    3  &  0 \\
		& \texttt{C}                      &   4&    4  &    3 &    4  & 10  \\ \hline
	\end{tabular}
	\caption{Instances solved for (CD) within a time limit of 1 hour.}
	\label{table:number_instances_solved}
\end{table}

Our preliminary experiments show that including cuts only at integer nodes of the \textit{branch-and-bound} tree is more efficient than including them in nodes with fractional solutions. Thus, in our experiments, we only separate integer solutions unless we specify the opposite. We used the \texttt{LazyCons\-traintCallback} function of \texttt{CPLEX} to separate integer solutions. Fractional solutions were separated using the \texttt{UserCutCallback} function.

We evaluate the implementation of the branch-and-Benders-cut algorithm (\texttt{B\&BC}) proposed in Section \ref{subsec:benders_approach} that generates facet-defining cuts (equation \eqref{eq:cut-facets}). We will use the nomenclature of \texttt{BD\_CW} to refer to this routine. In addition, we have added cuts to the root node because we have verified that this is profitable.  We compare our \texttt{BD\_CW} implementation with the direct use of \texttt{CPLEX} and with the automatic Benders procedure proposed by \texttt{CPLEX}, noted by \texttt{Auto\_BD}. \texttt{CPLEX} provides three configurations related to the decomposition. We have set the one that attempts to decompose the model strictly according to the decomposition provided by the user.

We perform the experiments with a limit of one hour of CPU time considering $10$ instances. Tables in this section show average values obtained for solution times in seconds, relative gaps in percent, and number of cuts needed. To determine these averages, for each value of $\lambda$ we have classified the instances into three blocks: block \texttt{A} contains those instances that have been solved to optimality with the three routines, block \texttt{B} contains those that have not been solved to optimality without any of the routines and block \texttt{C} is composed of those routines solved to optimality with one or two of routines proposed. Table \ref{table:number_instances_solved} shows this information.

\begin{table}[ht]
	\centering
	\begin{tabular}{ccccccc}
		\cline{3-7}
		&  & \texttt{CPLEX} & \multicolumn{2}{c}{\texttt{Auto\_BD}} & \multicolumn{2}{c}{\texttt{BD\_CW}} \\ \hline
		$\alpha$  & $\lambda$ & \texttt{t} &    \texttt{t}       &  \texttt{cuts}        &     \texttt{t}      &     \texttt{cuts}     \\ \hline
		\multirow{5}{*}{0.25} & 0 & - &     -      &     -     &    -       &    -      \\
		& 0.25 & 650.11 &     593.75   &   9423     &   399.88    &  17910     \\
		& 0.5 & 565.71 &   376.23   &     8101   &     436.89   &   16963    \\
		& 0.75 & 1784.65 &     1391.18    &    8747  &   395.65   &    15774    \\
		& 1 & 304.72 &    139.57   &      266   &    178.77    &    13038   \\ \hline
		\multirow{5}{*}{0.4} & 0 & 781.80 &    553.34 &   6190    &  2463.12     &    16697      \\
		& 0.25 & 2658.41 &   703.57  &     9703   &   1464.16  &   18700     \\
		& 0.5 & 1960.28 &     904.24  &  17592  &    1186.19  &     25258     \\
		& 0.75 & 1824.15 &   1240.95 &   11272    &     915.15  &   17235    \\
		& 1 & 1392.97 &     147.06    &   505  &    751.05  &   18882   \\ \hline
	\end{tabular}
	\caption{Comparing the performance of the three algorithms for (CD) considering those instances in block \texttt{A}.}
	\label{table:comp_results_1}
\end{table}

By observing Tables \ref{table:comp_results_1}, \ref{table:comp_results_2} and \ref{table:comp_results_3} in this section, we have observed similar conclusions for the two values considered for $\alpha$.

\begin{itemize}
	\item If $\lambda=0$, our Branch-and-Benders cut approach that generates facet-defining cuts is not competitive. The direct use of \texttt{CPLEX} is the best option, except for those instances that belong to block \texttt{B} and being $\alpha=0.25$. In this case, the \texttt{Auto\_BD} is the best option.
	\item If $\lambda=1$, \texttt{Auto\_BD} is the most competitive. In this case, all the instances were solved to optimality. It seems that solving the center problem takes less time than for the median problem.
	\item If $\lambda=0.25$ and $\alpha=0.25$, our Branch-and-Benders cut approach \texttt{BD\_CW} is the most competitive for any of the blocks of instances considered. In this case, the resolution times are almost at least 200 seconds shorter and \texttt{gaps} are at least 1.9\% smaller. Nevertheless, if $\lambda=0.25$ and $\alpha=0.4$, \texttt{auto\_BD} is the best option in blocks \texttt{A} and \texttt{C}.
\end{itemize}

\begin{itemize}
	\item If $\lambda=0.5$, our \texttt{BD\_CW} is the most competitive for blocks \texttt{B} and \texttt{C}. That is, for those instances in \texttt{B} its associated \texttt{gaps} are at least 4.6\% percent smaller. Besides, all of the instances in \texttt{C} were solved to optimality using \texttt{BD\_CW}, but not all using \texttt{Auto\_BD}. \texttt{Auto\_BD} is the best option for those instances in block \texttt{A}, which are a minority.
	\item If $\lambda=0.75$, our Branch-and-Benders cut approach \texttt{BD\_CW} is the most competitive for any of the blocks of instances considered. In block \texttt{A}, the resolution times are 315 seconds shorter. In \texttt{B}, \texttt{gaps} are at least 4\% smaller. Almost all the instances in \texttt{C} were solved to optimality using \texttt{BD\_CW} and in less time.
\end{itemize}

\begin{table}[htpb]
	\centering
	\begin{tabular}{ccccccc}
		\cline{3-7}
		&  & \texttt{CPLEX} & \multicolumn{2}{c}{\texttt{Auto\_BD}} & \multicolumn{2}{c}{\texttt{BD\_CW}} \\ \hline
		$\alpha$  & $\lambda$ & \texttt{gap} &    \texttt{gap}       &  \texttt{cuts}        &     \texttt{gap}      &     \texttt{cuts}     \\ \hline
		\multirow{5}{*}{0.25} & 0 & 1.94 &     2.72    &   11465   &     3.80    &   18672     \\
		& 0.25 & 5.98 &    3.99  &   20490    &     2.33      &    23217      \\
		& 0.5 & 8.05  &    5.68       &     12026  &    0.98   &     22555   \\
		& 0.75 & - &    -       &   -       &   -        &   -       \\
		& 1 & - &     -      &    -      &    -       &      -    \\ \hline
		\multirow{5}{*}{0.4} & 0 & 1.76 &    3.88     &    11488   &    5.57     &    20765      \\
		& 0.25 & 6.92 &    5.89      &     14723  &     4.85   &    23050    \\
		& 0.5 & 9.40 &     8.54      &    17592 &     3.96   &    25358    \\
		& 0.75 & 11.55 &    12.03  &    19758  &     7.46   &     26672   \\
		& 1 & - &    -    &  -      & -       &     -   \\ \hline
	\end{tabular}
	\caption{Comparing the performance of the three algorithms for (CD) considering those instances in block \texttt{B}.}
	\label{table:comp_results_2}
\end{table}

\begin{table}[htpb]
	\centering
	\begin{tabular}{cccccccccc}
		\cline{3-10}
		&  & \multicolumn{2}{c}{\texttt{CPLEX}} & \multicolumn{3}{c}{\texttt{Auto\_BD}} & \multicolumn{3}{c}{\texttt{BD\_CW}} \\ \hline
		$\alpha$  & $\lambda$ & \texttt{t} &    \texttt{gap}       &  \texttt{t} &  \texttt{gap}  & \texttt{cuts}        &     \texttt{t} & \texttt{gap}     &     \texttt{cuts}     \\ \hline
		\multirow{5}{*}{0.25} & 0 & 1910.51 & 0.21  &   2311.26    &    0.08  &    8986    &  3600   & 2.15 &    18084  \\
		& 0.25 & 2706.31 &   0.60   &  2440.15   &    0.89      &    9665       &  2188.04    & 0.53 &  19471   \\
		& 0.5 & 3037.74 &   4.78     & 2837.76  & 2.88  &  11502   & 934.64 &  0   &   19979   \\
		& 0.75 & 2999.38 &    1.36    & 3145.60  &     3.91     &  10778   &  1042.03     &   0    & 20805   \\
		& 1 & - &  -   &   -   &    -    &  -      &    -  & - &  -   \\ \hline
		\multirow{5}{*}{0.4} & 0 & 1051.13 & 0  & 3199.12    &   0.99 &  8132  &  3600  & 3.19 &  17395  \\
		& 0.25 & 3483.18  &  4.02  &   1546.28    &     0     &    10234    &  2462.57 & 0.36 &     20010   \\
		& 0.5 & 3600 &   7.36   &  1822.76   &   1.38   &     11915  &    2254.52  & 0 & 23132  \\
		& 0.75 & 3600 &   7.67   &  2324.73   &   1.84   &      12707     &   1882.99   & 0.13 & 21696  \\
		& 1 & - &   -   &  -   &  -   &      -     &   -    & - &  -  \\ \hline
	\end{tabular}
	\caption{Comparing the performance of the three algorithms for (CD) considering those instances in block \texttt{C}.}
	\label{table:comp_results_3}
\end{table}

\section{Conclusions}\label{sec:conclusions} 

In this paper, we studied the $\lambda$-cent-dian and the generalized-center problems in Network Design. Both problems aim to minimize a linear combination of the maximum and average traveled distances. The $\lambda$-cent-dian problem, where $\lambda\in [0,1]$, minimizes a convex combination of these objectives, while the generalized-center problem minimizes the difference between them. We explored these concepts under two versions of Pareto-optimality: the first version considers the shortest paths of each origin/destination (O/D) pair, while the second version addresses both objective functions simultaneously. Regarding the second version, we found that the introduction of the new concept of maximum $\lambda$-cent-dian is necessary to generate the entire set of Pareto-optimal solutions.

Furthermore, we proposed mathematical formulations for the first time for these problems.

The solutions for the generalized-center problem can often be deemed inefficient, as they tend to inflate the median value artificially. To address this, we introduce an efficiency constraint to the problem, ensuring that the median value does not deviate significantly from the median value observed in the median network. This modified approach to the generalized-center problem can be likened to performing a lexicographic optimization of both the median and generalized-center objectives.

Additionally, we illustrated the $\lambda$-cent-dian, where $\lambda\in[0,1]$, and the generalized-center solutions using various inequality measures. In scenarios where the efficiency constraint is considered, we have confirmed that the generalized-center solution is not consistently dominated.

Finally, given the hardness of these problems, for case $\lambda\in[0,1]$, we have studied and formulated a branch-and-Benders-cut method. Our computational results show that our method for (CD) is competitive against the one proposed by \texttt{CPLEX} for medium-sized instances with 40 nodes.

\section{Compliance with Ethical Standards}

This work is partially supported by Ministerio de Ciencia e Innovaci\'on under grant PID2020-114594GB-C21 funded by MICIU/AEI/10.13039/501100011033, grant US-1381656 funded by Programa Operativo FEDER/Andalucía and grant ANID PIA AFB230002 funded by the Instituto Sistemas Complejos de Ingenier\'ia.

All authors declare that they have no conflict of interest.

This article does not contain studies with human participants or animals performed by any of the authors.

\begin{appendices}
	\section{Proof of Proposition \ref{prop_C3:full_dimensional}}\label{appendix:proof}
	\begin{proof}
		we define the subgraph \( (\tilde{N}^w,\tilde{E}^w) \) of \( N^w \) induced by a feasible path from \( w^s \) to \( w^t \). To prove the result, we exhibit $\vert N\vert + \vert E\vert + \vert W\vert + 2$ affinely independent feasible points:
		\begin{itemize}
			\item $y_i = 0,\, i\in N$, \quad $x_e = 0,\, e\in E$, \quad $\zeta^w=u^w,\, w\in W$, \quad $\gamma^{max}=\max\limits_{w\in W}\{\zeta^w\}$. 
			\item $y_i = 0,\, i\in N$, \quad $x_e = 0,\, e\in E$, \quad $\zeta^w=u^w,\, w\in W$, \quad $\gamma^{max}=2\,\max\limits_{w\in W}\{\zeta^w\}$.
			\item For each $i\in N$, the points:
			\begin{gather*}
				y_{i}=1, \, y_{i'} = 0, \, i' \in N\setminus\{i\},\quad x_e = 0,\, e\in E,\quad \gamma^w =u^w, \, w\in W, \gamma^{max}=\max\limits_{w\in W}\{\zeta^w\}.
			\end{gather*}
			\item For each $e=\{i,j\}\in E$, the point:
			\begin{gather*}
				y_k=1, \, k\in e,\, y_k = 0,\, k \in N \setminus \{i,j\},\quad x_e=1,\, x_{e'} = 0,\, e'\in E\setminus\{e\},\\
				\zeta^w =u^w, \, w\in W, \quad \gamma^{max} = \max\limits_{w\in W}\{\zeta^w\}.
			\end{gather*}
			\item For each $w\in W$, the point:
			\begin{gather*}
				y_i=1,\, i \in \widetilde{N}^w,\, y_i=0,\, i \in N\setminus \widetilde{N}^w, \quad	x_e=1, \, e\in \widetilde{E}^w,\, x_e = 0,\, e\in E\setminus\widetilde{E}^w,\\
				\zeta^w=2\,u^w,\, \zeta^{w'} = u^w, \, w'\in W\setminus\{w\},\quad
				\gamma^{max} = \max\limits_{w\in W}\{\zeta^w\}.
			\end{gather*}
		\end{itemize}
		If we write them as rows of a matrix of dimension \mbox{$(|N|+|E|+|W|+2)\times(|N|+|E|+|W|+1)$}, using elementary row operations we obtain a lower echelon form in which all its columns are pivots.	
	\end{proof}
\end{appendices}

\bibliography{sn-bibliography}% common bib file
%% if required, the content of .bbl file can be included here once bbl is generated
%\input sn-article.bbl

\end{document}